\renewcommand*{\backref}[1]{}
\renewcommand*{\backrefalt}[4]%
  {{[\tiny\ifcase #1 Not cited.\relax\or Page~#2.\else Pages #2.\fi]}}
\def\tab(#1){\mbox{\small$\young(#1)$}\,}
\def\NewTheorem#1{%
  \newaliascnt{#1}{equation}
  \newtheorem{#1}[#1]{#1}
  \aliascntresetthe{#1}
  \expandafter\def\csname #1autorefname\endcsname{#1}
}
\def\equationautorefname~#1\null{(#1)\null}
\newcounter{main}
\theoremstyle{plain}
\newtheorem{THEOREM}[main]{Theorem}
\numberwithin{equation}{section}
\theoremstyle{definition}
\theoremstyle{remark}
\newaliascnt{Example}{equation}
\newenvironment{Point}%
  {\refstepcounter{equation}\trivlist
  \item[\hskip\labelsep(\theequation)\space]
   \ignorespaces
  }{\unskip\nobreak\hfil%
    \penalty50\hskip2em\hbox{}\nobreak\hfil$\Diamond$%
    \parfillskip=0pt\finalhyphendemerits=0\penalty-100\endtrivlist
}
\def\map#1#2{\,{:}\,#1\!\longrightarrow\!#2}
  \gdef\set#1{\mathinner{\lbrace\,{\mathcode`\|"8000%
                                   \let|\midvert #1}\,\rbrace}}
\def\midvert{\egroup\mid\bgroup}
\def\({\Big(}
\def\){\Big)}
\let\gdom=\triangleright
\let\gedom=\trianglerighteq
\let\Sect=\S
\def\N{\mathbb N}
\def\Z{\mathbb Z}
\def\Sym{\mathfrak S}
\def\H{\mathscr{H}}
\def\blam{{\boldsymbol\lambda}}
\def\bmu{{\boldsymbol\mu}}
\def\bnu{{\boldsymbol\nu}}
\def\s{\mathfrak s}
\def\t{\mathfrak t}
\def\v{\mathfrak v}
\def\S{\mathsf S}
\def\T{\mathsf T}
\def\U{\mathsf U}
\def\tlam{\t^\blam}
\def\Tmu{\T^\bmu}
\def\tmu{\t^\bmu}
\def\tnu{\t^\bnu}
\def\Std(#1){{\mathcal T}^{\text{Std}}(#1)}
\def\SStd(#1,#2){{\mathcal T}^{\text{SStd}}_{#2}(#1)}
\DeclareMathAlphabet{\mathpzc}{OT1}{pzc}{m}{it}
\DeclareMathOperator{\Ind}{Ind}
\DeclareMathOperator{\iInd}{\text{$i$}-Ind}
\DeclareMathOperator{\Res}{Res}
\DeclareMathOperator{\Shape}{Shape}
\begin{document}
\bibliographystyle{andrew}
\title{A Specht filtration of an induced Specht module}
\author{Andrew Mathas}
\address{School of Mathematics and Statistics F07, University of
Sydney, NSW 2006, Australia.}
\email{a.mathas@usyd.edu.au}
\subjclass[2000]{20C08, 20C30, 05E10}
\keywords{Cyclotomic Hecke algebras, Specht modules, representation theory}
\thanks{This research was supported, in part, by the Australian
Research Council}

\dedicatory{ To John Cannon and Derek Holt on the occasions of their
significant birthdays,\\ in recognition of their distinguished
contributions to mathematics.}

\begin{abstract}
Let $\H_n$ be a (degenerate or non-degenerate) Hecke algebra of type
$G(\ell,1,n)$, defined over a commutative ring $R$ with one, and let
$S(\bmu)$ be a Specht module for $\H_n$. This paper shows that the
induced Specht module $S(\bmu)\otimes_{\H_n}\H_{n+1}$ has an explicit Specht
filtration.
\end{abstract}

\maketitle

\makeatletter
\def\ps@firstpage{
  \def\@oddhead{{\small \textit{J. Algebra}, \textbf{322} (2009), 893--902.}\hss}
}
\makeatother

%%%%%%%%%%%%%%%%%%%%%%%%%%%%%%%%%%%%%%%%%%%%%%%%%%%%%%%%%%%%%%%%
\section{Introduction}
The Ariki-Koike algebras, and their rational degenerations, are
interesting algebras which appear naturally in the
representation theory of affine Hecke algebras, quantum groups,
symmetric groups and  general linear groups; see
\cite{M:cyclosurv,Klesh:book}
for details. They include as special cases the group algebras of the
Coxeter groups of type $A$ (the symmetric groups) and the Coxeter
groups of type~$B$ (the hyperoctahedral groups).

Let $\H_n$ be an Ariki-Koike algebra, or a degenerate cyclotomic Hecke
algebra, of type $G(\ell,1,n)$, for integers $\ell,n\ge1$. For each
multipartition $\bmu$ of $n$ there is a \textbf{Specht module} $S(\bmu)$,
which is a right $\H_n$-module.  (All of the undefined terms and notation,
here and below, can be found in section~2.) When $\H_n$ is semisimple the
Specht modules give a complete set of pairwise non-isomorphic irreducible
$\H_n$-modules as $\bmu$ runs through the multipartitions of $n$. In
general, the Specht modules are not irreducible however every irreducible
$\H_n$-module arises, in a unique way, as the simple head of some Specht
module.

The Hecke algebra $\H_n$ embeds into $\H_{n+1}$ so there are natural
induction and restriction functors, $\Ind$ and $\Res$, between the
categories of finite dimensional $\H_n$-modules and $\H_{n\pm1}$-modules.
By \cite[Proposition 1.9]{AM:simples}, in the Ariki-Koike case the
restriction of the Specht module $S(\bmu)$ to $\H_{n-1}$ has a Specht
filtration of the form
\begin{equation}\label{E:restriction}
0=R_0\subset R_1\subset\dots\subset R_r=\Res S(\bmu),
\end{equation}
such that $R_j/R_{j-1}\cong S(\bmu-\rho_j)$, where
$\rho_1>\rho_2>\dots>\rho_r$ are the removable nodes of~$\bmu$. Consequently,
if $\H_{n+1}$ is semisimple then by Frobenius reciprocity
$$\Ind S(\bmu)\cong S(\bmu\cup \alpha_1)\oplus\dots\oplus
	      S(\bmu\cup \alpha_a),$$
where $\alpha_1,\dots,\alpha_a$ are the addable nodes of~$\bmu$.
This note generalizes this result to the case when
$\H_n$ is not necessarily semisimple. More precisely, we prove the
following:

\begin{THEOREM}
Suppose that $\H_n$ is an Ariki-Koike algebra or a degenerate cyclotomic
Hecke algebra of type $G(\ell,1,n)$ and let $\bmu$ be a multipartition of
$n$. Then, as an $\H_{n+1}$-module, the induced module $\Ind S(\bmu)$ has a
filtration
$$0=I_0\subset I_1\subset\dots\subset I_a=\Ind S(\bmu),$$
such that $I_j/I_{j-1}\cong S(\bmu\cup \alpha_j)$, where
$\alpha_1>\alpha_2>\dots>\alpha_a$ are the addable nodes of $\bmu$.
\end{THEOREM}

This result is part of the folklore for the representation theory of
these algebras, however, we have been unable to find a proof of it in
the literature when $\ell>1$. If $\ell=1$ then our Main Theorem is an
old result of James~\cite[\Sect17]{James} in the degenerate case (that
is, for the symmetric group), and it can be deduced from
\cite[Theorem~7.4]{DJ:reps} in the non-degenerate case (the Hecke
algebra of the symmetric group). We prove our Main Theorem by giving
an explicit construction of $\Ind S(\blam)$; see
\autoref{explicit}.  Our argument is similar in spirit to that
originally used by James~\cite{James} for the symmetric groups in that
we identify the induced module as a quotient of the corresponding
permutation module. Our approach, which uses cellular basis
techniques, gives an explicit Specht filtration of the induced module; in
contrast, James' approach is recursive.

Suppose now that $\H_n$ is defined over a field of characteristic
$p\ge0$, or a suitable discrete valuation ring. Then by projecting
onto the blocks of $\H_n$ the induction functor $\Ind$ can be decomposed
as a direct sum of subfunctors
$$\Ind=\bigoplus_{i\in I}\iInd,$$
where $I=\Z/p\Z$, in the degenerate case, and
$I=\set{q^aQ_s|a\in\Z\text{ and }1\le s\le r}$ in the non-degenerate
case. (If the parameters $Q_1,\dots,Q_r$ are all non-zero then, up to
Morita equivalence, it is enough to consider the cases where
$Q_1,\dots,Q_r$ are all powers of $q$ by the main result of
\cite{DM:Morita}. In this case we can take $I=\Z/e\Z$ where $e$ is the
smallest positive integer such that $1+q+\dots+q^{e-1}=0$.) The
functor $\iInd$ is a natural generalization of Robinson's
$i$-induction functor; see \cite[1.11]{AM:simples} and
\cite[\Sect8]{Klesh:book} for the precise definitions.

\begin{Corollary}\label{i-induction}
Suppose that  $\bmu$ is a multipartition of $n$ and $i\in I$. Then $\iInd S(\bmu)$
has a filtration
$$0=I_0\subset I_1\subset\dots\subset I_b=\iInd S(\bmu),$$
such that $I_j/I_{j-1}\cong S(\bmu\cup \alpha_j)$, where
$\alpha_1>\alpha_2>\dots>\alpha_b$ are the addable $i$-nodes of $\bmu$.
\end{Corollary}

\begin{proof}By \cite{LM:AKblocks} and \cite{Brundan:degenCentre}, the
    Specht modules $S(\bmu\cup\alpha)$ and $S(\bmu\cup\beta)$ are in
    the same block if and only if $\alpha$ and $\beta$ have the same
    residue. By the Main Theorem and the definition of the functor $\iInd$,
    the Specht module $S(\bmu\cup\alpha)$ is a subquotient of $\iInd S(\bmu)$
    if and only if $\alpha$ is an $i$-node (\textit{cf.}~\cite[Cor.~1.12]{AM:simples}).
    This implies the result.
\end{proof}

Recently Brundan and Kleshchev~\cite{BK:GradedKL} have shown that $\H_n$ is
naturally $\Z$-graded and Brundan, Kleshchev and
Wang~\cite{BKW:GradedSpecht} have shown that $S(\bmu)$ admits a natural
grading.  There should be a graded analogue of our induction theorem; see
\cite[Remark~4.12]{BKW:GradedSpecht} for a precise conjecture.
Unfortunately, the arguments of this paper do not automatically lift to the
graded setting because it is not clear how to use our results to find a
homogeneous basis of the induced module.

\section{Ariki-Koike algebras}
In order to make this note self-contained, this section quickly
recalls the definitions and results that we need from the literature
and, at the same time, sets our notation. We concentrate on the
non-degenerate case as the degenerate case follows in exactly the same
way, with only minor changes of notation, using the results of
\cite[\Sect6]{AMR}. See the remarks at the end of this section for
more details.

Throughout this note we fix positive integers $\ell$ and $n$ and let
$\Sym_n$ be the symmetric group of degree~$n$. For $1\le i<n$ let
$s_i=(i,i+1)\in\Sym_n$. Then $s_1,\dots,s_{n-1}$ are the standard
Coxeter generators of $\Sym_n$.

Let $R$ be a commutative ring with $1$ and let $q,Q_1,\dots,Q_\ell$ be
elements of $R$ with~$q$ invertible. The Ariki--Koike algebra
$\H_n=\H_{R,\ell,n}(q,Q_1,\dots,Q_\ell)$ is the associative unital
$R$--algebra with generators $T_0,T_1,\dots,T_{n-1}$ and relations
$$\begin{array}{r@{\ }l@{\ }ll}
  (T_0-Q_1)\dots(T_0-Q_\ell) &=&0, \\
  (T_i-q)(T_i+1) &=&0,&\text{for $1\le i\le n-1$,}\\
  T_0T_1T_0T_1&=&T_1T_0T_1T_0,\\
  T_{i+1}T_iT_{i+1}&=&T_iT_{i+1}T_i,&\text{for $1\le i\le n-2$,}\\
  T_iT_j&=&T_jT_i,&\text{for $0\le i<j-1\le n-2$.}
\end{array}$$
Using the relations it follows that there is a unique anti-isomorphism
$*\map{\H_n}\H_n$ such that $T_i^*=T_i$, for $0\le i<n$.

Ariki and Koike~\cite[Theorem~3.10]{AK} showed that $\H_n$ is free as an
$R$-module with basis
$\set{L_1^{a_1}\dots L_n^{a_n}T_w|0\le a_1,\dots,a_n<\ell
         \text{ and } w\in\Sym_n}$
where $L_1=T_0$ and $L_{i+1}=q^{-1}T_iL_iT_i$ for~$i=1,\dots,n-1$, and
$T_w=T_{i_1}\dots T_{i_k}$ if $w=s_{i_1}\dots s_{i_k}\in\Sym_n$ is
a reduced expression (that is, $k$ is minimal).

The Ariki-Koike basis theorem implies that there is a natural
embedding of $\H_n$ in $\H_{n+1}$ and that $\H_{n+1}$ is free as an
$\H_n$-module of rank $\ell(n+1)$.
% Indeed, setting $T_{n-1,n}=1$ and $T_{n-1,a+1}=T_{n-1}\dots T+a$ one
% can show that $\set{T_{n-1,a+1}L_n^i|0\le i<\ell\text{ and }1\le a\le n}$
% is a basis for $\H_{n+1}$ as an $\H_n$-module.
If $M$ is an $\H_n$-module let
$$\Ind M=M\otimes_{\H_n}\H_{n+1}$$
be the corresponding induced $\H_{n+1}$-module. Note that induction is
an exact functor since $\H_{n+1}$ is free as an $\H_n$-module.

We will need to the following easily proved property of the basis
elements~\cite[2.1]{DJM:cyc}.

\begin{Point}\label{L commute}
  Suppose that $1\le k\le n$, $a\in R$ and $w\in\Sym_k\times\Sym_{n-k}$.
  Then $$(L_1-a)\dots(L_k-a)T_w=T_w(L_1-a)\dots(L_k-a).$$
\end{Point}

The algebra $\H_n$ has another basis which is crucial to this note. In
order to describe it recall that a partition of $n$ is a weakly
decreasing sequence
$\lambda=(\lambda_1\ge\lambda_2\ge\dots)$ of non-negative integers such
that $|\lambda|=\sum_i\lambda_i=n$. A \textbf{multipartition}, or
$\ell$-partition, of $n$ is an ordered $\ell$-tuple
$\blam=(\lambda^{(1)},\dots,\lambda^{(\ell)})$ of partitions such that
$|\blam|=|\lambda^{(1)}|+\dots+|\lambda^{(\ell)}|=n$. Let $\Lambda^+_{\ell,n}$ be the
set of multipartitions of $n$. If $\blam,\bmu\in\Lambda^+_{\ell,n}$ then $\blam$
\textbf{dominates} $\bmu$, and we write~$\blam\gedom\bmu$, if
$$\sum_{t=1}^{s-1}|\lambda^{(t)}|+\sum_{i=1}^k\lambda^{(s)}_i
\ge\sum_{t=1}^{s-1}|\mu^{(t)}|+\sum_{i=1}^k\mu^{(s)}_i,$$
for $1\le s\le \ell$ and for all $k\ge1$. Dominance is a partial order on
$\Lambda^+_{\ell,n}$.

If $\blam$ is a multipartition let
$\Sym_\blam=\Sym_{\lambda^{(1)}}\times\dots\times\Sym_{\lambda^{(\ell)}}$
be the corresponding parabolic subgroup of $\Sym_n$ and set
$a^\blam_s=\sum_{t=1}^{s-1}|\lambda^{(t)}|$, for $1\le s\le\ell$, and
put $a^\blam_{\ell+1}=n-1$. Define $m_\blam=u_\blam^+ x_\blam$ where
 $$u_\blam^+=\prod_{s=2}^\ell\prod_{k=1}^{a^\blam_s}(L_k-Q_s)
 \quad\text{ and }\quad x_\blam=\sum_{w\in\Sym_\blam}T_w.$$
Then $u_\blam^+ x_\blam=m_\blam=x_\blam u_\blam^+$ by \eqref{L commute}.

Let $\blam$ be a multipartition (of $n$).  The \textbf{diagram} of
$\blam$ is the set of nodes
$$[\blam]=\set{(r,c,s)|1\le\lambda^{(s)}_r\le c\text{ and }1\le s\le \ell}.$$
More generally a \textbf{node} is any element of
$\N\times\N\times\{1,\dots,\ell\}$, which we consider as a partially
ordered set where $(r,c,s)\ge(r',c',s')$ if either $s>s'$, or $s=s'$ and
$r<r'$. For the sake of \autoref{i-induction} only, define the
\textbf{residue} of the node $(r,c,s)$ to be $q^{c-r}Q_s$.

An \textbf{addable} node of $\blam$ is any node $\alpha\notin[\blam]$ such
that $[\blam]\cup\{\alpha\}$ is the diagram of some multipartition. Let
$\blam\cup\alpha$ be the multipartition such that
$[\blam\cup\alpha]=[\blam]\cup\{\alpha\}$. Similarly, a \textbf{removable}
node of $\blam$ is a node $\rho\in[\blam]$ such that
$[\blam]-\{\rho\}$ is the diagram of a multipartition; let
$\blam-\rho$ be this multipartition. Note that the set of addable and
removable nodes for $\blam$ are both totally ordered by $>$.

If $X$ is a set then an $X$-valued $\blam$-tableau is a function
$\T\map{[\blam]}X$.  If $\T$ is a $\blam$-tableau then we write
$\Shape(\T)=\blam$.  For convenience we identify
$\T=(\T^{(1)},\dots,\T^{(\ell)})$ with a labeling of the diagram $[\blam]$
by elements of $X$ in the obvious way. Thus, we can talk of the rows,
columns and components of $\T$.

A \textbf{standard $\blam$-tableau} is a map
$\t\map{[\blam]}\{1,2,\dots,n\}$ such that for $s=1,\dots,\ell$ the
entries in each row of $\t^{(s)}$ increase from left to right and the
entries in each column of $\t^{(s)}$ increase from top to bottom.
Let $\Std(\blam)$ be the set of standard $\blam$-tableaux.

Let $\tlam$ be the standard $\blam$-tableau such that the entries in
$\tlam$ increase from left to right along the rows of
$\t^{\lambda^{(1)}}, \dots, \t^{\lambda^{(\ell)}}$ in order.  If $\t$ is a
standard $\blam$-tableau let $d(\t)\in\Sym_n$ be the unique
permutation such that $\t=\tlam d(\t)$. Define
$m_{\s\t}=T_{d(\s)}^*m_\blam T_{d(\t)}$, for $\s,\t\in\Std(\blam)$.
By \cite[Theorem 3.26]{DJM:cyc}, the set
$$\set{m_{\s\t}|\s,\t\in\Std(\blam)\text{ and }\blam\in\Lambda^+_{\ell,n}}$$
is a cellular basis of $\H_n$. Consequently, if $\H_n(\blam)$
is the $R$-module spanned by
$$\set{m_{\s\t}|\s,\t\in\Std(\bmu)\text{ for some
             }\bmu\in\Lambda^+_{\ell,n} \text{ with }\bmu\gdom\blam},$$
then $\H_n(\blam)$ is a two-sided ideal of $\H_n$.

The \textbf{Specht module} $S(\blam)$ is the submodule of
$\H_n/\H_n(\blam)$ generated by $m_\blam+\H_n(\blam)$. It follows from
the general theory of cellular algebras that $S(\blam)$ is free as an
$R$-module with basis $\set{m_\t|\t\in\Std(\blam)}$, where
$m_\t=m_{\tmu\t}+\H_n(\blam)$ for $\t\in\Std(\blam)$.

Let $M$ be an $\H_n$-module. Then $M$ has a \textbf{Specht filtration}
if there exists a filtration $$0=M_0\subset M_1\subset \dots\subset
M_k=M$$ and multipartitions $\blam_1,\dots,\blam_k$ such that
$M_i/M_{i-1}\cong S(\blam_i)$, for $i=1,\dots,k$.

For each multipartition $\bmu\in\Lambda^+_{\ell,n}$ let
$M(\bmu)=m_\bmu\H_n$. The final result that we will need gives an
explicit Specht filtration of $M(\bmu)$. The proof of our Main Theorem
is inspired by this filtration.

Given two tuples $(i,s)$ and $(j,t)$ write $(i,s)\preceq(j,t)$ if
either $s<t$, or $s=t$ and $i\le j$.

\begin{Definition}[\protect{\cite[Definition~4.4]{DJM:cyc}}]
  Suppose that $\blam,\bmu\in\Lambda^+_{\ell,n}$ and let
   $\T\map{[\blam]}\N\times\{1,2,\dots,\ell\}$ be a $\blam$-tableau.
   Then:
   \begin{enumerate}
       \item $\T$ is a tableau of \textbf{type} $\bmu$ if
$\mu^{(s)}_i=\#\set{x\in[\lambda]|\T(x)=(i,s)}$,
for all $i\ge1$ and~$1\le s\le\ell$.
\item  $\T$ is \textbf{semistandard} if the entries
    in each component $\T^{(s)}$, for $1\le s\le \ell$, of $\T$ are:
\begin{enumerate}
  \item weakly increasing from left to right along each row
      (with respect to $\preceq$);
\item strictly increasing from top to bottom down columns; and,
\item $(j,t)$ appears in $\T^{(s)}$ only if $t\ge s$.
\end{enumerate}
\noindent Let $\SStd(\blam,\bmu)$ be the set of semistandard
$\blam$--tableau of type $\bmu$ and let
$\SStd(\Lambda^+_{\ell,n},\bmu)=\bigcup_{\blam\in\Lambda^+_{\ell,n}}\SStd(\blam,\bmu)$ be
the set of all semistandard tableaux of type $\bmu$.
   \end{enumerate}
\end{Definition}

Let $\t$ be a standard $\blam$--tableau. Define $\bmu(\t)$ to be the
tableau obtained from $\t$ by replacing each entry $j$ in $\t$ with $(i,s)$
if $j$ appears in row~$i$ of $\t^{\mu^{(s)}}$. The tableau $\bmu(\t)$ is a
$\blam$--tableau of type $\bmu$; it is not necessarily
semistandard. Finally, if $\S\in\SStd(\blam,\bmu)$ and $\t\in\Std(\blam)$
set
$$m_{\S\t}=\sum_{\substack{\s\in\Std(\blam)\\\bmu(\s)=\S}} m_{\s\t}.$$

\begin{Point}\protect{\cite[Theorem 4.14 and Corollary 4.15]{DJM:cyc}}
\label{sstd basis thm}
Suppose that $\blam,\bmu\in\Lambda^+_{\ell,n}$. Then:
\begin{enumerate}
  \item $M(\bmu)$ is free as an $R$--module with basis
$$\set{m_{\S\t}|\S\in\SStd(\blam,\bmu),\t\in\Std(\blam)
                 \text{ for some }\blam\in\Lambda^+_{\ell,n}}.$$
\item Suppose that $\SStd(\Lambda^+_{\ell,n},\bmu)=\{\S_1,\dots,\S_m\}$ ordered so
  that $i\le j$ whenever $\blam_i\gedom\blam_j$, where $\blam_i=\Shape(\S_i)$.
  Let $M_i$ be the $R$-submodule of $M(\bmu)$ spanned by the elements
  $\set{m_{\S_j\t}|j\le i\text{ and }\in\Std(\blam_j)}$,
Then
$$0=M_0\subset M_1\subset\dots\subset M_m=M(\bmu)$$
is an $\H_n$-module filtration of $M(\bmu)$ and
$M_i/M_{i-1}\cong S(\blam_i)$, for $1\le i\le m$.
\end{enumerate}
\end{Point}

\begin{Remark}
    Very few changes need to be made to the results above in the
    degenerate case. The analogue of the cellular basis $\{m_{\s\t}\}$
    in the degenerate case is constructed in \cite[\Sect6]{AMR}.
    Using this basis of the degenerate Hecke algebra, the construction
    of the Specht filtration of the ideals $M(\bmu)$ follows easily
    using the arguments of \cite[\Sect4]{DJM:cyc};
    \textit{cf.}~\cite[Cor.~6.13]{BK:Higher}. The arguments in the next section,
    modulo minor differences in the meaning of the symbols, applies to
    both the degenerate and non-degenerate cases.
\end{Remark}

\section{Inducing Specht modules}
We are now ready to start proving the Main Theorem. Fix a
multipartition $\bmu\in\Lambda^+_{\ell,n}$. As in \eqref{sstd basis thm}  we let
$\SStd(\Lambda^+_{\ell,n},\bmu)=\{\S_1,\dots,\S_m\}$ be the set of semistandard
tableau of type $\bmu$ ordered so that $i\le j$ whenever
$\blam_i\gedom\blam_j$, where $\blam_i=\Shape(\S_i)$ for $1\le i\le m$. So,
in particular, $\S_m=\Tmu=\bmu(\tmu)$ is the unique semistandard
$\bmu$-tableaux of type~$\bmu$.

Throughout this section we freely identify $\H_n$ with its image under the
natural embedding $\H_n\hookrightarrow\H_{n+1}$. In particular, we
will think of the basis element $m_{\s\t}$ as an element of~$\H_{n+1}$,
for standard $\blam$-tableaux $\s,\t\in\Std(\blam)$ with $\blam\in\Lambda^+_{\ell,n}$. This embedding also identifies $\Ind M(\mu)$ with a submodule of $\H_{n+1}$.

The following simple Lemma contains the idea which drives our proof.

\begin{Lemma}\label{filtration}
  Suppose that $\bmu$ is a multipartition of $n$ and let $\omega$ be the
  lowest addable node of $\bmu$ (that is, $\alpha\ge\omega$ whenever
  $\alpha$ is an addable node of $\bmu$). Then :
\begin{enumerate}
\item $\Ind M(\bmu)=M(\bmu\cup\omega)$.
\item The induced module
$\Ind M(\bmu)$ has a filtration
$$0=N_0\subset N_1\dots\subset N_m=\Ind M(\bmu)$$
such that $N_i/N_{i-1}\cong \Ind S(\blam_i)$, where
$\blam_i=\Shape(\S_i)$ for $1\le i\le m$.
\end{enumerate}
\end{Lemma}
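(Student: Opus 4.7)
The plan is to derive (a) from the identity $m_{\bmu\cup\omega}=m_\bmu$ in $\H_{n+1}$, which forces both $M(\bmu\cup\omega)$ and $\Ind M(\bmu)$ to agree with the right ideal $m_\bmu\H_{n+1}$; then (b) follows immediately by applying the exact functor $\Ind$ to the Specht filtration of $M(\bmu)$ supplied by~\ref{sstd basis thm}(b).

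For (a), the key point is that the lowest addable node $\omega$ has two features. First, because $\omega$ contributes a row of length one, it adds no Coxeter generator, so $\Sym_{\bmu\cup\omega}=\Sym_\bmu$ as subgroups of $\Sym_{n+1}$ and hence $x_{\bmu\cup\omega}=x_\bmu$. Second, the cumulative sums satisfy $a^{\bmu\cup\omega}_s=a^\bmu_s$ for all $2\le s\le\ell$, so reading off the definition of $u^+$ gives $u^+_{\bmu\cup\omega}=u^+_\bmu$. Multiplying, $m_{\bmu\cup\omega}=m_\bmu$, and therefore $M(\bmu\cup\omega)=m_{\bmu\cup\omega}\H_{n+1}=m_\bmu\H_{n+1}$. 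On the other hand, because $\H_{n+1}$ is free --- and in particular flat --- as a left $\H_n$-module, the natural multiplication map $M(\bmu)\otimes_{\H_n}\H_{n+1}\to\H_{n+1}$ is injective with image $m_\bmu\H_{n+1}$, which identifies $\Ind M(\bmu)$ with $M(\bmu\cup\omega)$ and completes (a).

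For (b), apply $\Ind$ to the filtration $0=M_0\subset M_1\subset\cdots\subset M_m=M(\bmu)$ of~\ref{sstd basis thm}(b), whose successive quotients are the Specht modules $S(\blam_i)$. Freeness of $\H_{n+1}$ over $\H_n$ makes $\Ind$ exact, so $N_i:=\Ind M_i$ produces the claimed filtration $0=N_0\subset\cdots\subset N_m=\Ind M(\bmu)$ with $N_i/N_{i-1}\cong\Ind(M_i/M_{i-1})\cong\Ind S(\blam_i)$.

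The main obstacle, as I see it, is the two claims about $\omega$ in (a): verifying that the lowest addable node really does leave both $\Sym_\bmu$ and every $a^\bmu_s$ for $s\ge 2$ undisturbed. Once that bookkeeping is settled, the remaining identifications --- via flatness for (a) and via exactness of $\Ind$ for (b) --- are purely formal.
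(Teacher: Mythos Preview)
Your proof is correct and follows essentially the same approach as the paper: the paper also observes that $m_{\bmu}=m_{\bmu\cup\omega}$ under the embedding $\H_n\hookrightarrow\H_{n+1}$ to get (a), and then invokes exactness of induction together with~\ref{sstd basis thm}(b) to get (b). Your version simply spells out the bookkeeping behind $m_\bmu=m_{\bmu\cup\omega}$ (that $\Sym_{\bmu\cup\omega}=\Sym_\bmu$ and $a^{\bmu\cup\omega}_s=a^\bmu_s$ for $2\le s\le\ell$) and the flatness argument identifying $m_\bmu\H_n\otimes_{\H_n}\H_{n+1}$ with $m_\bmu\H_{n+1}$, both of which the paper leaves implicit.
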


\begin{proof}
  By definition, $m_\bmu=m_{\bmu\cup\omega}$ using the embedding
  $\H_n\hookrightarrow\H_{n+1}$. Therefore,
  $$\Ind M(\bmu)=m_\bmu\H_n\otimes_{\H_n}\H_{n+1}
               =m_\bmu\H_{n+1}=m_{\bmu\cup\omega}\H_{n+1}
               =M(\bmu\cup\omega),$$
proving~(a). As induction is exact, part~(b) follows from part~(a) and
\eqref{sstd basis thm}(b).
\end{proof}

If $\bmu=((n),(0),\dots,(0))$ then $S(\bmu)=M(\bmu)$. The
Main Theorem in this special case is just part~(b) of the Lemma. To
prove the theorem when $\bmu\ne((n),(0),\dots,(0))$ we explicitly
describe the filtration of $\Ind M(\bmu)$ given by the Lemma in terms of
the basis of $M(\bmu\cup\omega)$ from \eqref{sstd basis thm}.

Let $\omega$ be the lowest addable node of $\bmu$. Then
$\omega=(z,1,\ell)$, where $z\ge1$ is minimal such that
$(z,1,\ell)\notin[\bmu]$. Suppose that $\S\in\SStd(\blam,\bmu)$, for
some $\blam\in\Lambda^+_{\ell,n}$, and that~$\beta$ is an addable node of
$\blam$. Let $\S\cup\beta$ be the semistandard
$(\blam\cup\beta)$-tableau given by
$$(\S\cup\beta)(\eta)=\begin{cases}
          \S(\eta),&\text{if }\eta\in[\blam],\\
	  (z,\ell),&\text{if }\eta=\beta.
      \end{cases}$$
Thus $\S\cup\beta$ is the semistandard $(\blam\cup\beta)$-tableau of type
$\bmu\cup\omega$ obtained by adding the node~$\beta$ to~$\S$ with label
$(z,\ell)$.  Let $\SStd(\S,\bmu\cup\omega)$ be the set of semistandard
tableau of type~$\bmu\cup\omega$ obtained in this way from~$\S$ as $\beta$
runs over the addable nodes of $\blam$. It is easy to see that every
semistandard tableau of type~$\bmu\cup\omega$ arises uniquely in this way, so
$$\refstepcounter{equation}
\SStd(\Lambda^+_{\ell,n+1},\bmu\cup\omega)
        =\coprod_{\S\in\SStd(\Lambda^+_{\ell,n},\bmu)}\SStd(\S,\bmu\cup\omega).
\leqno\theequation\label{sstd decomp}
$$
Armed with this notation, observe that if $\S\in\SStd(\blam,\mu)$ then
$m_{\S\tlam}=m_{(\S\cup\beta)\t^{\blam\cup\beta}}$, as an element of
$\H_{n+1}$, where $\beta$ is the lowest addable node of $\blam$.

Suppose that $1\le a\le b<n$. Let $\Sym_{a,b}$ be the symmetric group
on $\{a,a+1,\dots,b\}$ and set $s_{b,a}=(b,b+1)\dots(a,a+1)\in\Sym_n$
and $T_{b,a}=T_{s_{b,a}}=T_b\dots T_a$. For convenience, we set
$T_{b,a}=1$ if $b<a$. The following useful identity
is surely known.

\begin{Lemma}\label{identity}
  Suppose that $1\le a<b\le n$. Then
  $$\Big(\sum_{w\in\Sym_{a,b}} T_w\Big)T_{b,a}
             =T_{b,a}\Big(\sum_{v\in\Sym_{a+1,b+1}}\!\!\!\!T_v\Big).$$
\end{Lemma}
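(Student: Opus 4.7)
The plan is a direct term-by-term comparison, using that conjugation by $s_{b,a}$ defines a length-preserving bijection between $\Sym_{a,b}$ and $\Sym_{a+1,b+1}$. First I would verify the conjugation identity $s_{b,a}^{-1}s_is_{b,a}=s_{i+1}$ for $a\le i\le b-1$, by tracking the action on $i+1$ and $i+2$. Consequently, conjugation gives a group isomorphism $\phi\colon\Sym_{a,b}\to\Sym_{a+1,b+1}$ sending each Coxeter generator $s_i$ of $\Sym_{a,b}$ to $s_{i+1}$. In particular, shifting indices turns a reduced expression for $w\in\Sym_{a,b}$ into a reduced expression for $\phi(w)$, so $\ell(\phi(w))=\ell(w)$.

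The crucial step is the length additivity
\[
\ell(ws_{b,a})=\ell(w)+\ell(s_{b,a})\qquad\text{for every }w\in\Sym_{a,b}.
\]
This I would obtain from the standard coset theory for $\Sym_{a,b}\subseteq\Sym_{a,b+1}$: the set $\{1,\,s_b,\,s_bs_{b-1},\dots,\,s_{b,a}\}$ is a complete family of distinguished right-coset representatives, each of minimal length in its coset, so lengths add on products of elements of $\Sym_{a,b}$ with members of this set. Taking the representative $s_{b,a}$ yields the identity. The mirrored statement $\ell(s_{b,a}v)=\ell(s_{b,a})+\ell(v)$ for $v\in\Sym_{a+1,b+1}$ then follows by rewriting $s_{b,a}v=\phi^{-1}(v)s_{b,a}$ with $\phi^{-1}(v)\in\Sym_{a,b}$ and using $\ell(\phi^{-1}(v))=\ell(v)$.

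With these length identities in hand the proof is immediate: for each $w\in\Sym_{a,b}$,
\[
T_wT_{b,a}=T_{ws_{b,a}}=T_{s_{b,a}\phi(w)}=T_{b,a}T_{\phi(w)},
\]
and summing over $w$ (so that $\phi(w)$ runs over $\Sym_{a+1,b+1}$) yields the identity of the lemma. The main obstacle is establishing the length statement; those who prefer to avoid the coset theory can verify it by a direct inversion count, using that $s_{b,a}$ is the cycle sending $a\mapsto b+1$ and $j\mapsto j-1$ for $a<j\le b+1$, and that every $w\in\Sym_{a,b}$ fixes $b+1$.
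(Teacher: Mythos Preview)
Your proposal is correct and follows essentially the same route as the paper: you establish the coset identity $\Sym_{a,b}\,s_{b,a}=s_{b,a}\,\Sym_{a+1,b+1}$ via conjugation, then use that $s_{b,a}$ is a distinguished coset representative to obtain the length additivity needed for $T_wT_{b,a}=T_{ws_{b,a}}=T_{s_{b,a}\phi(w)}=T_{b,a}T_{\phi(w)}$. The only cosmetic difference is that the paper packages both length identities at once by citing that $s_{b,a}$ is a distinguished $(\Sym_{a,b},\Sym_{a+1,b+1})$-double coset representative, whereas you obtain the right-hand identity from the left-hand one using the length-preserving isomorphism~$\phi$.
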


\begin{proof}
  It is easy to check that $\Sym_{a,b}s_{b,a}=s_{b,a}\Sym_{a+1,b+1}$
  and that $s_{b,a}$ is a distinguished
  $(\Sym_{a,b},\Sym_{a+1,b+1})$-double coset representative (in the
  sense of \cite[Prop.~4.4]{M:Ulect}, for example).  Therefore, if
  $w\in\Sym_{a,b}$ and $v=s_{b,a}ws_{b,a}\in\Sym_{a+1,b+1}$ then
  $T_wT_{b,a}=T_{ws_{b,a}}=T_{s_{b,a}v}=T_{b,a}T_v$
  by \cite[Prop.~3.3]{M:Ulect}. This implies the Lemma.
\end{proof}

\begin{Lemma}\label{bump}
  Suppose that $\blam\in\Lambda^+_{\ell,n}$ and $\bnu=\blam\cup\beta$, where
  $\beta=(r,c,e)$ is an addable node of $\blam$.
  Then $T_{n-1,a+1}m_\bnu\in m_\blam\H_{n+1}$, where
  $a=a^\blam_1+\dots+a^\blam_e+\lambda^{(e)}_1+\dots+\lambda^{(e)}_r$.
\end{Lemma}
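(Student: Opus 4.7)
My plan is to compute $T_{n-1,a+1}\,m_\bnu$ directly, handling the factorisation $m_\bnu = u^+_\bnu\, x_\bnu$ one piece at a time: Lemma~\ref{identity} will be the key ingredient for the $x_\bnu$-factor, while the commutation relation $T_i L_{i+1} = L_i T_i + (q-1) L_{i+1}$ will do the work for the $u^+_\bnu$-factor.

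For the $x_\bnu$-factor, set $b = a + 1 - \lambda^{(e)}_r$, with the convention that $b = a+1$ when $\lambda^{(e)}_r = 0$ (i.e.\ when $\beta$ begins a new row). Then the row of $\bnu$ containing $\beta$ occupies positions $\{b, \dots, a+1\}$ of $\t^\bnu$, while the rows of $\bnu$ lying after the row of $\beta$ occupy $\{a+2, \dots, n+1\}$ and constitute the ``shift-by-one'' of the corresponding rows of $\blam$, which occupy $\{a+1, \dots, n\}$ of $\tlam$. Accordingly we factor $x_\bnu = x_{\mathrm{pre}} \cdot x_{\Sym_{b,a+1}} \cdot x^\bnu_{\mathrm{post}}$ and $x_\blam = x_{\mathrm{pre}} \cdot x_{\Sym_{b,a}} \cdot x^\blam_{\mathrm{post}}$. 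Combining three observations---(i) $T_{n-1,a+1}$ commutes with $x_{\mathrm{pre}}$ because they act on disjoint index sets; (ii) iterated applications of Lemma~\ref{identity} yield $T_{n-1,a+1}\, x^\bnu_{\mathrm{post}} = x^\blam_{\mathrm{post}}\, T_{n-1,a+1}$; (iii) the coset decomposition $x_{\Sym_{b,a+1}} = x_{\Sym_{b,a}}\,(1 + T_a + T_a T_{a-1} + \dots + T_a \cdots T_b)$ together with the commutation of $T_{n-1,a+1}$ past each $T_j$ for $j \le a$---produces an identity of the form $T_{n-1,a+1}\, x_\bnu = x_\blam \cdot C$ for an explicit element $C \in \H_{n+1}$ that is a sum of products of the shape $T_{n-1,k}$.

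For the $u^+_\bnu$-factor, I write $u^+_\bnu = u^+_\blam \cdot E$ where $E = \prod_{s > e}(L_{a^\blam_s + 1} - Q_s)$ collects the factors that are new in passing from $\blam$ to $\bnu$. Then $T_{n-1,a+1}\, m_\bnu = x_\blam \cdot C \cdot u^+_\blam \cdot E$, and since \ref{L commute} gives $x_\blam u^+_\blam = u^+_\blam x_\blam = m_\blam$, it is enough to rewrite $C \cdot u^+_\blam$ in
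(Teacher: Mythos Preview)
Your treatment of the $x_\bnu$-factor is essentially what the paper does: iterated applications of Lemma~\ref{identity} shift $x^\blam_{\text{post}}$ to $x^\bnu_{\text{post}}$, and the coset decomposition $x_{\Sym_{b,a+1}} = x_{\Sym_{b,a}} D_{b,a}$ (the paper writes $D_{d,a}$ with $d=b$) yields $T_{n-1,a+1}\,x_\bnu = x_\blam\,T_{n-1,a+1}\,D_{b,a}$, so your $C = T_{n-1,a+1} D_{b,a}$. (One small slip: in (iii) you need $T_{n-1,a+1}$ to commute past $T_j$ only for $j\le a-1$, since $T_{a+1}$ does not commute with $T_a$; but this is all that is actually required to pull $x_{\Sym_{b,a}}$ to the left.)

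The genuine gap is in the $u^+$-part, which is where the real content of the lemma lies and where your proof breaks off. You propose to rewrite $C\cdot u^+_\blam$ using the relation $T_i L_{i+1} = L_i T_i + (q-1)L_{i+1}$, but each summand $T_{n-1,k}$ of $C$ corresponds to a cycle crossing every block boundary $a^\blam_s$ for $s>e$, so \ref{L commute} does not apply and pushing through term-by-term generates correction terms $(q-1)L_{j}$ whose fate is not at all obvious. There is no reason to expect $C\,u^+_\blam \in u^+_\blam\H_{n+1}$ on its own---the extra factor $E$ is essential, and your outline gives no mechanism for $E$ to interact with the correction terms.

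The paper sidesteps this entirely by \emph{not} separating $T_{n-1,a+1}$ from the new $L$-factors. Instead it factors $T_{n-1,a+1}$ at the block boundaries and interleaves the extra factors $(L_{a^\blam_s+1}-Q_s)$ between the pieces, obtaining directly
\[
m_\blam\Big(\prod_{s=\ell,\dots,e+1} T_{a^\blam_{s+1},\,a^\blam_s+1}\,(L_{a^\blam_s+1}-Q_s)\Big)\,T_{a^\blam_{e+1},\,a+1}\,D_{b,a} \;=\; T_{n-1,a+1}\,m_\bnu.
\]
Each piece $T_{a^\blam_{s+1},a^\blam_s+1}$ lives in the correct Young subgroup for \ref{L commute} to commute it past the portion of $u^+$ already accumulated, and the inserted $(L_{a^\blam_s+1}-Q_s)$ then extends that portion by one factor before the next piece is applied. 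This interleaving is the key idea you are missing; with it, only \ref{L commute} is ever needed and the single-$T_i$ commutation relation never enters.
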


\begin{proof}
  Let $D_{d,a}=1+T_a+T_{a,a-1}+\dots+T_{a,d}$, where
  $d=a-\lambda^{(e)}_r+1$.  Then~$D_{d,a}$ is the sum of distinguished
  right coset representatives for $\Sym_{d,a}$ in $\Sym_{d,a+1}$.
  Therefore, $x_\blam T_{n-1,a+1}D_{d,a}=T_{n-1,a+1}x_\bnu$ by
  \autoref{identity}.  On the other hand, it follows directly from
  the definitions that
  $u_\bnu^+
     =u^+_\blam(L_{a^\blam_\ell+1}-Q_\ell)\dots(L_{a^\blam_{e+1}+1}-Q_{e+1})$.
  Therefore, writing $m_\blam=x_\blam u^+_\blam$ and using \eqref{L commute} we see
  that
  $$m_\blam\Big(\prod_{s=\ell,\dots,e+1}T_{a^\blam_{s+1},a^\blam_s+1}
            (L_{a^\blam_s+1}-Q_s)\Big) T_{a^\blam_{e+1},a+1}D_{d,a}
  =T_{n-1,a+1}m_\bnu,$$
  where the product on the left-hand side is read in order, from left to
  right, with decreasing values of~$s$. (Recall that, for convenience,
  $a^\blam_{\ell+1}=n-1$ and $T_{n-1,n}=1$.)
\end{proof}

Let $\le$ be the Bruhat order on $\Sym_n$; see, for example,
\cite[p.30]{M:Ulect}.  If $\S$ is a semistandard $\blam$-tableau of
type $\bmu$ let $\dot\S$ be the unique standard $\blam$-tableau such
that $\bmu(\dot\S)=\S$ and $d(\dot\S)\le d(\s)$ whenever
$\s\in\Std(\blam)$ and $\bmu(\s)=\S$. Such a tableau $\dot\S$ exists
by \cite[Lemma~3.9]{JM:cyc-Schaper}.

\begin{Lemma}\label{closure}
  Suppose that $\S\in\SStd(\blam,\bmu)$ and that
  $\U\in\SStd(\S,\bmu\cup\omega)$. Let $\bnu=\Shape(\U)$.
  Then $m_{\U\tnu}\in m_{\S\tlam}\H_{n+1}$.
\end{Lemma}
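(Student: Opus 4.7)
The plan is to prove the lemma by producing an explicit element $h\in\H_{n+1}$ such that $m_{\U\tnu}=m_{\S\tlam}\cdot h$. First, I would parametrize the tableaux contributing to $m_{\U\tnu}$: since $\omega=(z,1,\ell)$ is a single box and the label $(z,\ell)$ appears in $\U=\S\cup\beta$ only at $\beta$, every standard $\bnu$-tableau $\u$ with $(\bmu\cup\omega)(\u)=\U$ must carry the entry $n+1$ at $\beta$, and removing it yields a standard $\blam$-tableau $\s$ with $\bmu(\s)=\S$. This gives a bijection $\s\leftrightarrow\u=\s\cup\beta$.

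Second, with $a$ as in Lemma~\ref{bump} (so that $\tnu$ carries entry $a+1$ at $\beta$), the cycle $w_\beta=s_n s_{n-1}\cdots s_{a+1}$ of length $n-a$ satisfies $\tnu\cdot w_\beta=\tlam\cup\beta$, whence $d(\u)=d(\s)\cdot w_\beta$. A direct inversion count shows that the lengths add, so $T_{d(\u)}^*=T_{w_\beta}^*T_{d(\s)}^*=(T_{a+1}T_{a+2}\cdots T_n)T_{d(\s)}^*$. Summing over the bijection then yields
$$m_{\U\tnu}=T_{a+1}T_{a+2}\cdots T_n\cdot u_\S\cdot m_\bnu,$$
where $u_\S:=\sum_{\s:\bmu(\s)=\S}T_{d(\s)}^*$, so that $m_{\S\tlam}=u_\S\,m_\blam$.

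Third, I would invoke Lemma~\ref{bump}, which produces an explicit $Y\in\H_{n+1}$ with $m_\blam\,Y=T_{n-1,a+1}m_\bnu$. Combining this identity with the commutation rules in~\ref{L commute}---in particular that $T_{a+1},\dots,T_n$ commute with every $L_k$ for $k\le a$, which covers the $L$-factors of $u^+_\blam$---together with Lemma~\ref{identity} to push $x_\blam$-type sums past the intervening $T$-chains, I expect to reshape $T_{a+1}\cdots T_n\cdot u_\S\cdot m_\bnu$ into $u_\S\cdot m_\blam\cdot h$ for an explicit $h\in\H_{n+1}$, which will prove the lemma.

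The hard part will be this last reshuffle: $T_{a+1}\cdots T_n$ does not commute with $u_\S$ in general, since $u_\S$ may involve Coxeter generators $T_i$ with $i\ge a$. My plan is to handle this by combining the braid relations between consecutive $T_i$'s with the ``transport'' structure built into $Y$ from Lemma~\ref{bump}, whose factors $T_{a^\blam_{s+1},a^\blam_s+1}$ are precisely engineered to relocate the extra $(L_{a^\blam_s+1}-Q_s)$ factors of $u^+_\bnu$ that are absent from $u^+_\blam$; matching $T_{a+1}\cdots T_n$ against these transports should produce the required cancellation and leave $u_\S$ intact on the left of $m_\blam$.
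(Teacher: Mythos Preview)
Your setup---the bijection $\s\leftrightarrow\u=\s\cup\beta$ and the factorisation of $d(\u)$ through $d(\s)$---is exactly right, and is precisely how the paper proceeds.  The error is in the \emph{order} of the factorisation.  From $\tnu\,w_\beta=\tlam\cup\beta$ and $(\tlam\cup\beta)\,d(\s)=\s\cup\beta=\u$ one gets $d(\u)=w_\beta\,d(\s)$, not $d(\s)\,w_\beta$.  (In the paper's right-action convention the cycle is $w_\beta=s_{a+1}s_{a+2}\cdots s_n$; your $s_n\cdots s_{a+1}$ is its inverse and does \emph{not} send $\tnu$ to $\tlam\cup\beta$.)  With the correct order and the length-additivity you already noted, one obtains
\[
T_{d(\u)}^*=T_{d(\s)}^*\,T_{w_\beta}^*=T_{d(\s)}^*\,T_nT_{n-1}\cdots T_{a+1},
\]
so summing over the bijection gives
\[
m_{\U\tnu}=u_\S\cdot T_nT_{n-1}\cdots T_{a+1}\cdot m_\bnu,
\]
with the $T$-chain sitting \emph{between} $u_\S$ and $m_\bnu$, not to the left of $u_\S$.

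This single correction dissolves your ``hard part'' entirely.  Lemma~\ref{bump} says $T_nT_{n-1}\cdots T_{a+1}\,m_\bnu=m_\blam\,h_{\bnu,a}$ for some explicit $h_{\bnu,a}\in\H_{n+1}$, and since $u_\S\,m_\blam=m_{\S\tlam}$ by definition, one line finishes the proof:
\[
m_{\U\tnu}=u_\S\,m_\blam\,h_{\bnu,a}=m_{\S\tlam}\,h_{\bnu,a}\in m_{\S\tlam}\H_{n+1}.
\]
This is exactly the paper's argument (the paper packages $u_\S$ as $h_\S T_{d(\dot\S)}^*$, but the content is identical).  There is no need to commute anything past $u_\S$, and the braid-relation ``reshuffle'' you anticipate is a phantom difficulty created by the transposed factorisation.
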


\begin{proof}
  Definition, $m_{\S\tlam}=\sum_\s m_{\s\tlam}$ where
  $d(\s)$ runs over a set of right $\Sym_\bmu$-coset representatives in the
  double coset $\Sym_\blam d(\dot\S)\Sym_\bmu$. Therefore,
  $m_{\S\tlam}=h_\S T_{d(\dot\S)}^*m_\blam$ for some $h_\S\in\H_q(\Sym_\bmu)$.
  (Explicitly, $h_\S=\sum_d T_d$ where $d$ runs over the set of
  distinguished left coset representatives of
  $\Sym_\bmu\cap d(\dot\S)\Sym_\blam d(\dot\S)^{-1}$ in $\Sym_\bmu$.)

  As in \autoref{bump}, write $\bnu=\blam\cup\beta$, where
  $\beta=(r,c,e)$ and set
  $a=a^\blam_1+\dots+a^\blam_e+\lambda^{(e)}_1+\dots+\lambda^{(e)}_r$.
  Then $\U=\S\cup\beta$. Therefore, $d(\dot\U)=s_{n-1,a+1}^{-1}d(\dot\S)$, so
  that $m_{\U\tnu}=hT_{d(\dot\S)}^*T_{n-1,a+1}m_\bnu$.

  Finally, $T_{n-1,a+1}m_\bnu=m_\blam h_{\bnu,a}$, for some
  $h_{\bnu,a}\in\H_{n+1}$, by \autoref{bump}. Therefore,
  $$m_{\U\tnu}=h_\S T_{d(\dot\S)}^*T_{n-1,a+1}m_\bnu
             =h_\S T_{d(\dot\S)}^*m_\blam h_{\bnu,a}
	     =m_{\S\tlam}h_{\bnu,a}\in m_{\S\tlam}\H_{n+1},$$
  as required.  \end{proof}

We can now make the filtration of \autoref{filtration}(b) explicit.
As a result we will show that we can obtain a basis for the induced
module by adding a node labeled $(z,\ell)$ to the basis elements
of~$M(\bmu)$ in all possible ways.

\begin{Theorem}\label{induced}
  Suppose that $\bmu\in\Lambda^+_{\ell,n}$ and order
  $\SStd(\Lambda^+_{\ell,n},\bmu)=\{\S_1,\dots,\S_m\}$ as above, with
  $\blam_i=\Shape(\S_i)$. Let
  $N_i$ be the $R$-submodule of $M(\bmu\cup\omega)$ spanned by the
  elements
  $$\set{m_{\U\v}|\U\in\SStd(\S_j,\bmu\cup\omega),\v\in\Std(\Shape(\U))
             \text{ for } 1\le j\le i},$$
  for $i=0,1,\dots,m$. Then $N_i$ is an $\H_{n+1}$-submodule of
  $\Ind M(\blam)$ and
  $$\Ind S(\blam_i)\cong N_i/N_{i-1},$$
  for $1\le i\le m$.
\end{Theorem}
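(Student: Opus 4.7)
The plan is to identify $N_i$ with the filtration term $\Ind M_i$ from Lemma~\ref{filtration}(b) (applied to the filtration $\{M_j\}$ of $M(\bmu)$ from \ref{sstd basis thm}(b)), regarded as an $\H_{n+1}$-submodule of $M(\bmu\cup\omega)=\Ind M(\bmu)$ via Lemma~\ref{filtration}(a); concretely, under that identification $\Ind M_i$ is the right ideal $M_i\cdot\H_{n+1}$ inside $M(\bmu\cup\omega)$. Once $N_i=\Ind M_i$ is proved, both conclusions of the theorem follow at once: $\Ind M_i$ is manifestly an $\H_{n+1}$-submodule, and exactness of induction applied to $M_i/M_{i-1}\cong S(\blam_i)$ gives $N_i/N_{i-1}\cong\Ind S(\blam_i)$.

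The first step is to establish $N_i\subseteq\Ind M_i$. A typical basis element $m_{\U\v}$ of $N_i$, with $\U\in\SStd(\S_j,\bmu\cup\omega)$ and $j\le i$, factors as $m_{\U\v}=m_{\U\tnu}T_{d(\v)}$ (where $\bnu=\Shape(\U)$), and Lemma~\ref{closure} places $m_{\U\tnu}$ in $m_{\S_j\t^{\blam_j}}\H_{n+1}$. Under the embedding $\H_n\hookrightarrow\H_{n+1}$ one has $m_{\blam_j}=m_{\blam_j\cup\beta_j}$, where $\beta_j$ is the lowest addable node of $\blam_j$: this node starts a new row of length one in the $\ell$th component, so it alters neither the $u^+$-factor nor the parabolic $\Sym_{\blam_j}$. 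Hence $m_{\S_j\t^{\blam_j}}$ belongs to $M_j\subseteq M_i$ when viewed inside $M(\bmu\cup\omega)$, and therefore $m_{\U\v}\in M_i\cdot\H_{n+1}=\Ind M_i$.

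To upgrade this inclusion to equality I would run a descending induction on $i$. The base case is $N_m=M(\bmu\cup\omega)=\Ind M_m$, which is immediate from \eqref{sstd decomp} and Lemma~\ref{filtration}(a). The rank of $N_i/N_{i-1}$ over $R$ is $\sum_\beta|\Std(\blam_i\cup\beta)|$ (sum over addable nodes of $\blam_i$), while $\Ind M_i/\Ind M_{i-1}\cong\Ind S(\blam_i)$ has rank $\ell(n+1)|\Std(\blam_i)|$; these agree by the standard branching identity $\sum_\beta|\Std(\blam\cup\beta)|=\ell(n+1)|\Std(\blam)|$, which can be proved by backward induction on the dominance order by equating the two expressions for the rank of $M(\blam\cup\omega)$ coming from \eqref{sstd decomp} and from Lemma~\ref{filtration}(a). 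Assuming $N_{i+1}=\Ind M_{i+1}$, the chain $N_i\subseteq\Ind M_i\subseteq N_{i+1}$ produces the short exact sequence
$$0\to\Ind M_i/N_i\to N_{i+1}/N_i\to\Ind M_{i+1}/\Ind M_i\to 0,$$
in which the middle and right terms are free $R$-modules of equal rank. Since a surjection between finitely generated free $R$-modules of the same rank is an isomorphism (Vasconcelos' theorem), the kernel $\Ind M_i/N_i$ vanishes and $N_i=\Ind M_i$. The main obstacle is precisely this last step: over a field, inclusion plus equal ranks would finish the argument immediately, but over an arbitrary commutative ring $R$ the descending-induction/Vasconcelos combination is needed to rule out torsion-type discrepancies between the two filtrations.
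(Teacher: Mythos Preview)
Your argument is correct and shares its core with the paper's proof: both identify $N_i$ with $\Ind M_i$, and both prove the inclusion $N_i\subseteq\Ind M_i$ by exactly the same application of Lemma~\ref{closure}. The difference lies entirely in the reverse inclusion. The paper argues directly: it first asserts that each $N_i$ is an $\H_{n+1}$-submodule (using that the cellular action on the basis $\{m_{\U\v}\}$ fixes $\U$ modulo terms of strictly more dominant shape), then uses the observation, recorded just before the theorem, that $m_{\S_i\t^{\blam_i}}=m_{(\S_i\cup\beta_i)\t^{\blam_i\cup\beta_i}}$ is one of the basis elements defining $N_i$; since $m_{\S_i\t^{\blam_i}}+M_{i-1}$ generates $M_i/M_{i-1}$, ascending induction gives $\Ind M_i\subseteq N_i$. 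Your route instead reaches equality by descending induction, the combinatorial identity $\sum_\beta|\Std(\blam\cup\beta)|=\ell(n+1)|\Std(\blam)|$, and the commutative-algebra fact that a surjection of free $R$-modules of the same finite rank is an isomorphism.

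The paper's approach is more elementary---no branching identity and no Vasconcelos---and finishes in a couple of lines once one knows $N_i$ is a submodule. Your approach is longer but has the pleasant feature that the submodule property of $N_i$ is a \emph{consequence} of $N_i=\Ind M_i$ rather than an input; in light of Remark~\ref{dominance} (the filtration $\{N_i\}$ is not ordered by dominance on shapes of the $\U$'s), the paper's one-line justification that ``the action respects dominance'' is doing more work than it appears to, so your detour is not without merit.
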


\begin{proof}
  By \autoref{filtration}(a), $\Ind M(\bmu)=M(\bmu\cup\omega)$ and
  by \eqref{sstd decomp} the set of elements
  $$\set{m_{\U\v}|\U\in\SStd(\S_j,\bmu\cup\omega),\v\in\Std(\Shape(\U))
             \text{ for } 1\le j\le m}$$
  is precisely the basis of $M(\bmu\cup\omega)$ given
  by \eqref{sstd basis thm}, so $M(\bmu\cup\omega)=N_m$.

  Recall the filtration $0=M_0\subset M_1\subset\dots\subset M_m=M(\blam)$ of
  $M(\blam)$ given in \eqref{sstd basis thm}. By \autoref{filtration}(b), to
  prove the Theorem it is enough to show that $\Ind M_i=N_i$, for $0\le i\le m$.
  By definition, $N_i$ is an $R$-submodule of $M(\bmu\cup\omega)$ but not necessarily an
  $\H_{n+1}$-submodule of $M(\bmu\cup\omega)$. We will show that
  $N_i=N_i\H_{n+1}=\Ind M_i$.  We argue by induction on~$i$. The claim is
  trivially true when $i=0$, so we may assume that $i>0$.

  First observe that if
  $\U\in\SStd(\S_j,\bmu\cup\omega)$ and $\bnu=\Shape(\U)$, where
  $1\le j\le i$, then $m_{\U\v}\in m_{\S_j\t^{\blam_j}}\H_{n+1}\subseteq\Ind M_j$ by \autoref{closure}.
  Therefore, $N_i\H_{n+1}\subseteq\Ind M_i$ since $N_{i-1}=\Ind M_{i-1}$ by
  induction.  Conversely,
  $m_{\S_i\t^{\blam_i}}\in N_i$ and
  $m_{\S_i\t^{\blam_i}}+\Ind M_{i-1}=m_{\S_i\t^{\blam_i}}+N_{i-1}$ generates
  $\Ind M_i/\Ind M_{i-1}$ as an $\H_{n+1}$-module. So, using induction again, it
  follows that $\Ind M_i\subseteq N_i\H_{n+1}$. Hence, we have shown that
  $N_i\H_{n+1}=\Ind M_i$.

  It remains to show that $N_i=N_i\H_{n+1}$ is an $\H_{n+1}$-module.  Before we
  do this, observe that $\Ind M_i/\Ind M_{i-1}\cong\Ind S(\blam_i)$ is a free
  $R$-module of rank $\ell(n+1)\#\Std(\blam_i)$, because $\H_{n+1}$ is a free
  $\H_n$-module of rank $\ell(n+1)$. On the other hand, it is a simple
  combinatorial exercise to show that
  \begin{equation}\label{E:dimensions}
      \ell(n+1)\#\Std(\blam_i)
        = \sum_{\U\in\SStd(\S_i,\bmu\cup\omega)}\#\Std(\Shape(\U)).
  \end{equation}
  This can proved using \autoref{E:restriction}, or alternatively, using the
  ordinary branching rules for the symmetric groups (or for the complex
  reflection groups $(\Z/\ell\Z)\wr\Sym_n$) in characteristic zero.

  We now return to the proof of the theorem. We have that $N_i\subseteq
  N_i\H_{n+1}=\Ind M_i$. Suppose first that $R=K$ is a field.  Then
  $N_i/N_{i-1}$ and $\Ind M_i/\Ind M_{i-1}$ are $K$-vector spaces of the same
  dimension by \autoref{E:dimensions}. Since $N_{i-1}=\Ind M_{i-1}$, by
  induction, it follows that $N_i=N_i\H_{n+1}=\Ind M_i$. Hence, the theorem is
  proved when $R=K$ is a field.

  Finally, suppose that $R$ is an arbitrary commutative ring and let $K$ be the field of
  fractions of $R$.  Let $x$ be any element of $\Ind M_i$. Since
  $\{m_{\U\v}\}$ is a basis of $M(\bmu\cup\omega)$,
  $$x=\sum_{\substack{\U\in\SStd(\S_j,\bmu\cup\omega),\v\in\Std(\Shape(\U))\\1\le j\le m}}
                r_{\U\v}m_{\U\v},$$
  for uniquely determined $r_{\U\v}\in R$. Extending scalars to~$K$, it follows
  from the last paragraph that $r_{\U\v}\ne0$ only if
  $\U\in\SStd(\S_j,\bmu\cup\omega)$, where $1\le j\le i$. Hence,
  $x\in N_i$ with the consequence that $\Ind M_i\subseteq N_i$.  This completes
  the proof.
\end{proof}

For each addable node $\beta$ of $\bmu$ let $N^\beta$ be the submodule of
$M(\bmu\cup\omega)$ spanned by
$$\set{m_{\U\v}|\U\in\SStd(\blam,\bmu\cup\omega),
       \v\in\Std(\blam) \text{ where } \blam\in\Lambda^+_{\ell,n+1}
       \text{ and } \blam\gdom\bmu\cup\beta}+N_{m-1},$$
where $N_{m-1}$ is the submodule of $M(\bmu\cup\omega)$ defined in
\autoref{induced}. Note, in particular, that $N^\alpha=N_{m-1}$.

We can now prove a more explicit version of the Main Theorem of this paper.

\begin{Corollary}\label{explicit}
Suppose that $\bmu$ is a multipartition of $n$ and let
$\alpha_1=\alpha>\dots>\alpha_a=\omega$ be
the addable nodes of $\bmu$. Then
$\Ind S(\bmu)\cong M(\bmu\cup\omega)/N^\alpha$ is a free $R$-module
with basis
$$\set{m_{\U\v}+N^\alpha|\U\in\SStd(\bmu\cup\alpha_j,\bmu\cup\omega),
           \v\in\Std(\bmu\cup\alpha_j), \text{ for }1\le j\le a}.$$
In particular, $\Ind S(\bmu)$ has a filtration
$0=I_0\subset I_1\subset\dots\subset I_a=\Ind S(\bmu)$
such that $I_j/I_{j-1}\cong S(\bmu\cup\alpha_j)$, for $j=1,\dots,a$.
\end{Corollary}

\begin{proof}
    That $\Ind S(\bmu)\cong M(\bmu\cup\omega)/N^\alpha$ is a special
    case of \autoref{induced}. The second claim follows from
    \eqref{sstd basis thm} by setting $I_j=N^{\alpha_{j+1}}/N^\alpha$,
    for $0\le j<a$. To prove that $S(\bmu\cup\alpha_j)\cong I_j/I_{j-1}$, for
    $1\le j\le a$, observe that the bijective map
    $$S(\bmu\cup\alpha_j)\longrightarrow I_j/I_{j-1};
         m_\s\mapsto m_{(\Tmu\cup\alpha_j)\s}+I_{j-1},
	       \qquad\text{for }\s\in\Std(\bmu\cup\alpha_j),$$
    commutes with the action of $\H_{n+1}$. (Here, $\Tmu=\bmu(\tmu)$ is the
    unique semistandard $\bmu$-tableau of type $\bmu$.)
\end{proof}

\begin{Remark}\label{dominance}
    Maintain the notation of \autoref{induced} and define integers
    $a_i$ and multipartitions $\blam_{i,j}$ by
    writing $\{\blam_{i,1}\gdom\dots\gdom\blam_{i,a_i}\}
                   =\set{\Shape(\U)|\U\in\SStd(\S_i,\bmu\cup\omega)}$,
    for $i=1,\dots,m$. \autoref{induced} then implies, just as in the proof of
    \autoref{explicit}, that
    $M(\bmu\cup\omega)$ has a Specht filtration
    $$0\subset I_{1,1}\subset\dots\subset I_{1,a_1}\subset I_{2,1}
       \subset\dots\subset I_{m,a_m}=M(\bmu\cup\omega),$$
    with $I_{i,a}/I_{i,a}^<\cong S(\blam_{i,a})$,
    where $I_{i,a}$ is the submodule of $M(\bmu\cup\omega)$ with basis
    $$\set{m_{\U\v}|\U\in\SStd(\blam_{j,b},\bmu\cup\omega),
        \v\in\Std(\blam_{j,b}) \text{ where } j<i, \text{ or }
               j=i \text{ and }b\le a}$$
    and where $I_{i,a}^<=I_{i,a-1}$ if $a>1$, $I_{i,1}^<=I_{i-1,a_{i-1}}$ if
    $i>1$ and $I_{1,1}^<=0$.

    Fred Goodman has pointed out that this filtration of
    $M(\bmu\cup\omega)$ is, in general, different to that given by
    \eqref{sstd basis thm} because the order in which the Specht modules
    appear does not have to be compatible with the dominance
    ordering--note, however, that the Specht modules in each `layer'
    $N_i/N_{i-1}$ are totally ordered by dominance. For example, suppose that
    $\ell=1$ and let $\mu=(3^2,1)$ so that $\mu\cup\alpha=(4,3,1)$ and
    $\mu\cup\omega=(3^2,1^2)$. Then
    $$\U=\tab(1112,22,34)$$
    is a semistandard $\nu$-tableau of type $\mu\cup\omega$,
    where $\nu=(4,2^2)$. (As $\ell=1$ we can label semistandard
    tableaux with the integers $1,\dots,n$.) However, $\mu\cup\alpha\gdom\nu$ even though
    $\nu\ne\mu\cup\beta$ for any addable node $\beta$ of $\mu$.
\end{Remark}

As induction and restriction are both exact functors the main result
of this note, together with \cite[Prop.~1.9]{AM:simples} (and the
corresponding argument for the degenerate case), shows that the full
subcategory of $\H_n$\textbf{-mod} which consists of modules which
have a Specht filtration is closed under induction and restriction.

\begin{Corollary}\label{Specht filt}
  Suppose that $M$ has a Specht filtration. Then the modules
  $\Res M$ and $\Ind M$ both have Specht filtrations.
\end{Corollary}

In \cite[Theorem~3.6]{M:tilting} and \cite[Theorem 4.6]{BK:degenAK} it
is shown that for each multipartition $\bmu\in\Lambda^+_{\ell,n}$
there exists an indecomposable $\H_n$-module $Y(\bmu)$, a
\textbf{Young module}, such that
$$M(\bmu)\cong Y(\bmu)\oplus\bigoplus_{\blam\gdom\bmu}
             Y(\blam)^{\oplus c_{\blam\bmu}}$$
for some non-negative integers $c_{\blam\bmu}$. Each Young module $Y(\bmu)$
has a Specht filtration. Therefore, by \autoref{Specht filt}, $\Res
Y(\bmu)$ and $\Ind Y(\bmu)$ both have Specht filtrations.

%%%%%%%%%%%%%%%%%%%%%%%%%%%%%%%%%%%%%%%%%%%%%%%%%%%%%%%%%%%%%%%%

\section*{Acknowledgments}
I thank Fred Goodman and Alexander Kleshchev, first, for (independently) asking the
question that led to this paper and, secondly, for their comments and
suggestions on an earlier draft of the paper. In particular, I thank Fred Goodman
for pointing out a gap in the initial proof of \autoref{induced}

\end{document}